\newtheorem{thm}{Theorem}[section]
\newtheorem{lem}[thm]{Lemma}
\theoremstyle{definition}
\newtheorem{defn}[thm]{Definition}
\theoremstyle{remark} \theoremstyle{Proof}
\numberwithin{equation}{section}
\author[O. Ajebbar]{ Ajebbar Omar}
\address{ Ajebbar Omar\\Department of Mathematics\\
Ibn Zohr University, Faculty of Sciences, Agadir\\
Morocco} \email{omar-ajb@hotmail.com}
\author[E. Elqorachi]{ Elqorachi  Elhoucien}
\address{ Elqorachi Elhoucien\\Department of Mathematics\\
Ibn Zohr University, Faculty of Sciences, Agadir\\
Morocco} \email{elqorachi@hotmail.com}
\begin{document}

\title[Variants of Wilson's functional equation]{Variants of Wilson's functional equation on semigroups}

\keywords{Semigroup; involutive automorphism; Multiplicative
function; d'Alembert equation; Wilson equation.}

\thanks{2010 Mathematics Subject
Classification. Primary 39B52; Secondary 39B32}
\begin{abstract}
Given a semigroup $S$ generated by its squares equipped with an
involutive automorphism $\sigma$ and a multiplicative function
$\mu:S\to\mathbb{C}$ such that $\mu(x\sigma(x))=1$ for all $x\in S$,
we determine the complex-valued solutions of the following
functional equations
\begin{equation*}f(xy)+\mu(y)f(\sigma(y)x)=2f(x)g(y),\, x,y\in
S\end{equation*} and
\begin{equation*}f(xy)+\mu(y)f(\sigma(y)x)=2f(y)g(x),\, x,y\in
S\end{equation*}
\end{abstract}
\maketitle
\section{Introduction}
In \cite{Wilson1} Wilson dealt with functional equations related to
and generalizing the cosine functional equation
$g(x+y)+g(x-y)=2g(x)g(y),\,x,y\in\mathbb{R}$. He generalized the
cosine functional equation to
\begin{equation}\label{EQ}f(x+y)+f(x-y)=2f(x)g(y),\,x,y\in \mathbb{R},\end{equation}
that contains two unknown functions $f$ and
$g$. In \cite{Wilson2} he studied his second generalization
$$f(x+y)+f(x-y)=2h(x)k(y),\,x,y\in \mathbb{R},$$ that contains the
three unknown functions $f$, $h$ and $k$.\\
Acz\'{e}l's monograph \cite{Acz�l} contains references to earlier
works on (\ref{EQ}).\\
In \cite{Ebanks and Stetkaer1} Ebanks and Stetk{\ae}r studied the
solution of Wilson's functional equation
$$f(xy)+f(xy^{-1})=2f(x)g(y),\,x,y\in G$$ and of the following variant
of Wilson's functional equation
$$f(xy)+f(y^{-1}x)=2f(x)g(y),\,x,y\in G$$ on groups,
where $f,g:G\to\mathbb{C}$ are the unknown functions. We refer also to the recent study by Stetk\ae r \cite{ST}.\\
Elqorachi and Redouani \cite{Elqorachi and Redouani} solved the
following variant of Wilson's functional equation
\begin{equation*}f(xy)+\chi(y)f(\sigma(y)x)=2f(x)g(y),x,y\in G,\end{equation*}
where $G$ is a group, $\sigma$ is an involutive automorphism of $G$
and $\chi$ is a character of $G$.
\par In the present paper we solve the following variants of Wilson's
functional equation
\begin{equation}\label{EQ1}f(xy)+\mu(y)f(\sigma(y)x)=2f(x)g(y),\,x,y\in S\end{equation}
and
\begin{equation}\label{EQ2}f(xy)+\mu(y)f(\sigma(y)x)=2f(y)g(x),\,x,y\in S\end{equation}
on a semigroup $S$ generated by its squares. We will show that the
functional equation (\ref{EQ1}) is related to the d'Alembert's
functional equation, and equation (\ref{EQ2}) is related to the
following variant of d'Alembert's functional equation
\begin{equation}\label{EQ21}g(xy)+\mu(y)g(\sigma(y)x)=2g(x)g(y)\end{equation}
solved on semigroups by Elqorachi and Redouani \cite[lemma
3.2]{Elqorachi and Redouani}. Note that $\sigma$ in the present
paper is a homomorphism, not an anti-homomorphism like the group
inversion found in other papers.
\\The general setting of (\ref{EQ1}) and (\ref{EQ2}) is for $S$ to be a semigroup
generated by its squares, because to formulate and solve the
functional equations (\ref{EQ1}) and (\ref{EQ2}) we need only an
associative composition in $S$ and the assumption that the semigroup
$S$ is generated by its squares, not an identity element and
inverses. Thus we study in the present paper (\ref{EQ1}) extending
works in which $S$ is a group or a monoid.
\par The organization of the paper is as follows. In the next
section we give notations and terminology. In section 3 we give some
preliminaries. In the fourth and fifth sections we prove our main
results.
\par The explicit formulas for the solutions are expressed in terms
of multiplicative and additive functions.
\section{Notations and Terminology}
\begin{defn} Let $f:S\to\mathbb{C}$ be a function on a
semigroup $S$. We say that\\
$f$ is additive if $f(xy)=f(x)+f(y)$ for all $x,y\in S$.\\
$f$ is multiplicative if $f(xy)=f(x)f(y)$ for all $x,y\in S$.\\
$f$ is central if $f(xy)=f(yx)$ for all $x,y\in S$.
\end{defn}
If $S$ is a semigroup, $\sigma:S\to S$ an involutive automorphism
and $\mu:S\to\mathbb{C}$ a multiplicative function such that
$\mu(x\sigma(x))=1$ for all $x\in S$, we define, for any function
$F:S\to\mathbb{C}$, the function
$$F^{*}(x)=\mu(x)F(\sigma(x)),\,x\in S.$$
Let $f:S\to\mathbb{C}$. We call $f^{e}:=\dfrac{f+f^{*}}{2}$ the even
part of $f$ and
$f^{o}:=\dfrac{f-f^{*}}{2}$ its odd part. The function $f$ is said to be even if $f=f^{*}$, and $f$ is said to be odd if $f=-f^{*}$.\\
\textbf{Blanket assumption:} Throughout this paper $S$ denotes a
semigroup (a set with an associative composition) generated by its
squares. The map $\sigma:S\to S$ denotes an involutive automorphism.
That $\sigma$ is involutive means that $\sigma(\sigma(x))=x$ for all
$x\in S$. We denote by $\mu:S\to\mathbb{C}$ a multiplicative
function such that $\mu(x\sigma(x))=1$ for all $x\in S$.
\section{Preliminaries}
The following Lemmas will be useful later.
\begin{lem}\label{lem610}Let $G$ be a semigroup, $\chi:G\to\mathbb{C}$ be a multiplicative
function, $\varphi:G\to G$ be a homomorphism and $F:G\to\mathbb{C}$
satisfy that $F(xy)+\chi(y)F(\varphi(y)x)=0$ for all $x,y\in G$.
Then $F(xyz)=0$ for all $x,y,z\in G$. $F=0$, if $G=\{xy\,|\,x,y\in
G\}$.
\end{lem}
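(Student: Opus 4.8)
The plan is to exploit the defining relation $F(xy)+\chi(y)F(\varphi(y)x)=0$ by substituting carefully chosen arguments and comparing the resulting identities. The key observation is that the relation lets us rewrite $F$ evaluated at a product in two different ways depending on which factor we peel off, and iterating this should force $F$ to vanish on triple products. First I would record the basic identity in the form
\begin{equation*}
F(xy)=-\chi(y)F(\varphi(y)x),\quad x,y\in G,
\end{equation*}
and note that since $\varphi$ is a homomorphism and $\chi$ is multiplicative, applying the identity twice returns $F$ to an argument of the original shape but multiplied by $\chi$ of a product.

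**The main computation.**

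I would compute $F(xyz)$ by grouping the product two different ways. On one hand, writing $xyz=(xy)z$ and applying the identity with the pair $(xy,z)$ gives $F(xyz)=-\chi(z)F(\varphi(z)xy)$. On the other hand, writing $xyz=x(yz)$ and applying the identity with the pair $(x,yz)$ gives $F(xyz)=-\chi(yz)F(\varphi(yz)x)=-\chi(y)\chi(z)F(\varphi(y)\varphi(z)x)$. Now I would also apply the identity to $F(\varphi(z)xy)$, viewed as $F\big((\varphi(z)x)\cdot y\big)$, obtaining $F(\varphi(z)xy)=-\chi(y)F(\varphi(y)\varphi(z)x)$. Substituting this into the first expression yields $F(xyz)=\chi(z)\chi(y)F(\varphi(y)\varphi(z)x)$, which is exactly the negative of the second expression. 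Hence $F(xyz)=-F(xyz)$, so $2F(xyz)=0$ and therefore $F(xyz)=0$ for all $x,y,z\in G$.

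**Concluding under the surjectivity hypothesis.**

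For the final sentence, suppose $G=\{xy\mid x,y\in G\}$. Then every element $w\in G$ can be written $w=uv$ with $u,v\in G$, and in turn $u=xy$ for some $x,y\in G$, so $w=xyv$ is a triple product; by the previous paragraph $F(w)=F(xyv)=0$. Thus $F=0$ on $G$. I would phrase this as: since every element of $G$ is a product of two elements, and hence of three elements, $F$ vanishes identically.

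**Anticipated difficulty.**

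The argument is essentially a short symmetric manipulation, so there is no deep obstacle; the only point requiring care is the bookkeeping of the multiplicative factor $\chi$ and the homomorphism $\varphi$ when the identity is applied to an argument that itself already contains a $\varphi$-image — one must be sure that $\varphi(y)\varphi(z)x$ really is the common argument reached from both sides, which uses only that $\varphi$ is a homomorphism (not an anti-homomorphism) and that $\chi(yz)=\chi(y)\chi(z)$. I would double-check the sign by tracking the two factors of $-1$ picked up on the route through $(xy)z$ versus the single factor on the route through $x(yz)$.
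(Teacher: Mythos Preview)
Your proof is correct and essentially identical to the paper's: both apply the defining relation three times to reach $F(xyz)=-F(xyz)$ via the common intermediate $F(\varphi(y)\varphi(z)x)$, and then handle the surjectivity hypothesis by writing every element as a triple product. The only cosmetic difference is that the paper presents the computation as a single chain of equalities rather than two routes to be compared.
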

\begin{proof} Let $x,y,z\in G$ be arbitrary. Applying the hypothesis
on $F$ repeatedly we get that
$F(xyz)=F(x(yz))=-\chi(yz)F(\varphi(yz)x)=-\chi(z)\chi(y)F(\varphi(y)(\varphi(z)x))=\chi(z)F((\varphi(z)x)y)=\chi(z)F(\varphi(z)(xy))=-F(xyz)$,
which implies that $2F(xyz)=0$, so that $F(xyz)=0$. If
$G=\{xy\,|\,x,y\in G\}$ then any element of $G$ can be written as
$xyz$, where $x,y,z\in G$, so $F=0$. This finishes the proof.
\end{proof}
\begin{lem}\label{lem611}Let $G$ be a semigroup such that $G=\{xy\,|\,x,y\in
G\}$. Let $f$ and $F$ be functions on $G$ such that $f(xy)=F(yx)$
for all $x,y\in G$. Then $f=F$.
\end{lem}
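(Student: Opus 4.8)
The plan is to show that $F$ is central, and then simply read off $f=F$ from the hypothesis $f(xy)=F(yx)$ combined with the assumption that every element of $G$ is a product.

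The first step brings in associativity via triple products. Fix $a,b,c\in G$ and compute $f(abc)$ in two ways: grouping as $(ab)c$ gives $f((ab)c)=F(c(ab))=F(cab)$, while grouping as $a(bc)$ gives $f(a(bc))=F((bc)a)=F(bca)$. Since the composition in $G$ is associative, these are equal, so $F(cab)=F(bca)$ for all $a,b,c\in G$; relabelling yields $F(uvw)=F(wuv)$ for all $u,v,w\in G$, and iterating this cyclic shift gives the full cyclic invariance $F(uvw)=F(vwu)=F(wuv)$.

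The second step promotes cyclic invariance on triple products to genuine centrality. Given $x,y\in G$, use $G=\{xy\mid x,y\in G\}$ to write $x=uv$ with $u,v\in G$. Then $F(xy)=F(uvy)$ and $F(yx)=F(yuv)$, and these coincide by the cyclic invariance just obtained; hence $F(xy)=F(yx)$ for all $x,y\in G$, i.e.\ $F$ is central. (The same argument applied to $f$ would show $f$ is central too, but that is not needed.) Finally, let $z\in G$ be arbitrary and write $z=xy$ with $x,y\in G$; then $f(z)=f(xy)=F(yx)=F(xy)=F(z)$, using the hypothesis and the centrality of $F$, so $f=F$.

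I do not anticipate a real obstacle here; the only point that is not completely automatic is realising that the single two-variable identity $f(xy)=F(yx)$ must be fed through associativity on triple products to extract centrality of $F$ — the identity by itself, without using that $G$ equals its set of products, would not force $f=F$. The hypothesis $G=\{xy\mid x,y\in G\}$ is used twice: to upgrade cyclic invariance on triples to centrality, and to represent an arbitrary element of $G$ as a product.
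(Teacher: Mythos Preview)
Your proof is correct and follows essentially the same idea as the paper: exploit associativity on triple products together with the hypothesis $f(xy)=F(yx)$, then use $G=G^2$ to cover all of $G$. The paper packages this as a single chain $f(xyz)=F(zxy)=f(yzx)=F(xyz)$ showing $f=F$ directly on triple products, whereas you route through the intermediate statement that $F$ is central; the content is the same.
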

\begin{proof} For any $x,y,z\in G$ we have
$f(xyz)=f((xy)z)))=F(zxy)=F((zx)y)=f(y(zx))=f((yz)x)=F(x(yz))=F(xyz)$.
Applying the assumption on $G$ twice we see that any element of $G$
can be written as $xyz$, where $x,y,z\in G$. So $f=F$.
\end{proof}
\begin{lem}\label{lem612}\cite[lemma 3.2]{Elqorachi and Redouani}
Let $G$ be a semigroup, $\varphi:G\to G$
an involutive automorphism and $\chi:G\to \mathbb{C}$ be a
multiplicative function such that $\chi(x\varphi(x))=1$ for all
$x\in G$. The solutions $g$ of the functional equation
\begin{equation*}g(xy)+\mu(y)g(\sigma(y)x)=2g(x)g(y)\end{equation*}
on $G$ are of the form $g=\dfrac{m+\chi m\circ\varphi}{2}$, where
$m:G\to \mathbb{C}$ is a multiplicative function.
\end{lem}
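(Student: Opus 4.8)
The plan is to reduce the equation to d'Alembert's functional equation. Throughout, write $F^{*}(x):=\chi(x)F(\varphi(x))$; then $\chi$ is nowhere zero (as $\chi(x)\chi(\varphi(x))=\chi(x\varphi(x))=1$), $F^{**}=F$, and the even and odd parts $F^{e}:=(F+F^{*})/2$, $F^{o}:=(F-F^{*})/2$ satisfy $(F^{e})^{*}=F^{e}$ and $(F^{o})^{*}=-F^{o}$. One direction is immediate: a direct computation (using only that $\chi$ is multiplicative, $\varphi$ a homomorphism, and $\chi(x\varphi(x))=1$) shows that every $g=(m+m^{*})/2=(m+\chi\cdot m\circ\varphi)/2$ with $m$ multiplicative solves the equation, the zero solution corresponding to $m=0$. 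So the task is to show that an arbitrary solution $g$ has this form, which I would do in three stages: (1) show $g=g^{*}$; (2) use evenness to recast the equation and derive that $g$ is central and satisfies Kannappan's condition; (3) extract a multiplicative $m$ with $g=(m+m^{*})/2$.

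For stage (1): in the equation replace $x$ by $\varphi(x)$ and multiply through by $\chi(x)$; using $\chi(xy)=\chi(yx)$ and $\varphi(y)\varphi(x)=\varphi(yx)$ this becomes $\chi(x)g(\varphi(x)y)+g^{*}(yx)=2g^{*}(x)g(y)$. Subtracting from it the equation obtained by interchanging $x$ and $y$ in the original, one gets, after simplification, $g^{o}(yx)=2g(y)g^{o}(x)$ for all $x,y$. Next substitute $\varphi(x),\varphi(y)$ for $x,y$ in this identity and rewrite the three factors using $g^{o}\circ\varphi=-\chi^{-1}g^{o}$ and $g\circ\varphi=\chi^{-1}g^{*}$ (which follow from $(g^{o})^{*}=-g^{o}$ and $g^{**}=g$); cancelling the $\chi$-factors by means of $\chi(x\varphi(x))=1$ yields the companion identity $g^{o}(yx)=2g^{*}(y)g^{o}(x)$. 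Subtracting the two identities gives $g^{o}(y)g^{o}(x)=0$ for all $x,y$, so $g^{o}\equiv 0$, i.e. $g=g^{*}$. (Note this holds for every solution, $g=0$ included.)

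For stages (2)--(3): since $g=g^{*}$ one computes $g(\varphi(y)x)=g^{*}(\varphi(y)x)=\chi(\varphi(y))\chi(x)g(y\varphi(x))$, hence $\chi(y)g(\varphi(y)x)=\chi(x)g(y\varphi(x))$ and the equation also reads $g(xy)+\chi(x)g(y\varphi(x))=2g(x)g(y)$. Playing the two forms of the equation off against each other and against their $x\leftrightarrow y$ interchanges, and then running the usual "compute $g(xyz)$ in two ways" argument from the theory of d'Alembert's equation, one shows that $g$ is central and satisfies Kannappan's condition $g(xyz)=g(xzy)$; effectively $g$ then behaves like a $\chi$-twisted solution of the classical d'Alembert equation. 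The remaining step is to produce the multiplicative $m$: one fixes $z_{0}$ with $g(z_{0})\neq 0$ (possible since $g\neq 0$), forms a "square root of $g^{2}$"-type function from the identities relating $g(x^{2})$, $g(z_{0}x)$ and $g(x)$, uses centrality and Kannappan's condition to define it coherently, and checks that it is multiplicative on all of the semigroup with $g=(m+m^{*})/2$; since $m^{*}=\chi\cdot m\circ\varphi$, this is the asserted form. Equivalently, once centrality and Kannappan's condition are in hand, one may invoke the known structure theorem for d'Alembert's functional equation on semigroups. The main obstacle is this last stage, i.e. the actual extraction of $m$ (equivalently, carrying the d'Alembert structure theory over to the present twisted, automorphism setting); stages (1) and (2) are routine manipulations whose only delicate point is keeping track of the $\chi$- and $\varphi$-factors.
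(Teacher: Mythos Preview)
The paper does not actually give a proof of this lemma: it is stated with a citation to \cite[lemma 3.2]{Elqorachi and Redouani} and then used as a black box. So there is no ``paper's own proof'' to compare against, and your proposal has to be judged on its own.

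Your Stage (1) is correct and fully worked out: the identity $g^{o}(yx)=2g(y)g^{o}(x)$ and its $\varphi$-twin $g^{o}(yx)=2g^{*}(y)g^{o}(x)$ do follow exactly as you say, and subtracting them kills $g^{o}$. That is a clean argument and is genuinely the right first move.

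Stages (2) and (3), however, are not proofs but programmes. In Stage (2) you assert that ``the usual compute $g(xyz)$ in two ways'' argument yields centrality and Kannappan's condition, but you do not carry it out; on a bare semigroup (no identity, no inverses, no generated-by-squares hypothesis in the lemma as stated) this step is not automatic and needs to be written down. More seriously, in Stage (3) you propose either constructing $m$ by hand via a ``square-root of $g^{2}$'' device, or invoking ``the known structure theorem for d'Alembert's functional equation on semigroups''. The former is precisely the hard content of the lemma and you only gesture at it. The latter is problematic as stated: the standard d'Alembert structure theory on semigroups (e.g.\ the treatment in \cite{Stetkaer2}) is built for an involutive \emph{anti}-automorphism, whereas here $\varphi$ is an automorphism; the relevant automorphism version is Stetk{\ae}r's variant \cite{Stetkaer3}, but that result is for the untwisted case $\chi=1$, so you cannot simply cite it once the $\chi$-twist is present. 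In short, you have correctly isolated the obstacle and correctly handled the easy part, but the proposal stops exactly where the real work begins; as written it is an outline, not a proof.
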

\section{Solutions of Eq. (\ref{EQ1}) on a semigroup generated by its squares}
Recently, Elqorachi and Redouani \cite{Elqorachi and Redouani}
solved the generalized variant of Wilson's functional equation
(\ref{EQ1}) on groups with $\mu$ a group character. Fadli et al.
\cite{Fadli et al.} have some results on monoids with $\mu=1$. In
this section we extend the result in \cite{Elqorachi and Redouani}
to semigroups generated by their squares. We will need the following
Lemma relating the functional equation (\ref{EQ1}) to the
$\mu$-d'Alembert's functional equation.
\begin{lem}\label{lem64}Let $(f,g)$ be a solution of the functional
equation (\ref{EQ1}). Suppose that $f\neq0$ and $f$ is central. Then
$g$ satisfies the $\mu$-d'Alembert's functional equation
\begin{equation}\label{EQ4}g(xy)+\mu(y)g(x\sigma(y))=2g(x)g(y),\,x,y\in
S.\end{equation}
\end{lem}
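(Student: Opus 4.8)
The goal is to derive from (\ref{EQ1}), together with $f\neq 0$ and centrality of $f$, that $g$ satisfies the $\mu$-d'Alembert equation (\ref{EQ4}). The natural strategy is to compute $f(xyz)$ in two different ways using (\ref{EQ1}), compare, and extract an identity that, after using $f\neq 0$ to pick an element with $f(x_0)\neq 0$, yields (\ref{EQ4}). Concretely, I would start from (\ref{EQ1}) applied to the pair $(x,yz)$:
\[
f(x(yz))+\mu(yz)f(\sigma(yz)x)=2f(x)g(yz),
\]
and then expand each term. For the first term, write $f((xy)z)=2f(xy)g(z)-\mu(z)f(\sigma(z)(xy))$ by (\ref{EQ1}) applied to $(xy,z)$, and then expand $f(xy)$ again via (\ref{EQ1}). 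For the second term, use $\mu(yz)=\mu(y)\mu(z)$ and $\sigma(yz)=\sigma(y)\sigma(z)$ (here $\sigma$ is an automorphism), and apply (\ref{EQ1}) to the pair $(\sigma(z)x, \text{something})$ — this is where centrality of $f$ is used to move factors around inside the argument of $f$ so that the two expansions can be matched.

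**Key steps, in order.** First, I would establish the auxiliary relation obtained by computing $f(xyz)$ two ways; the centrality of $f$ lets me treat $f$ as a central function so that, for instance, $f(\sigma(y)\sigma(z)x)=f(x\sigma(y)\sigma(z))=f(\sigma(z)x\sigma(y))$, etc. After substituting (\ref{EQ1}) wherever a product of two "blocks" appears and collecting terms, most of the $f$-terms should cancel in pairs, leaving an identity of the shape
\[
f(x)\bigl[g(yz)+\mu(y)g(z\sigma(y)) - 2 g(y)g(z)\bigr] = \text{(terms that also carry a factor } f(\cdot)\text{)},
\]
or more plausibly, after full simplification, $f(x)\bigl[g(yz)+\mu(y)g(z\sigma(y))-2g(z)g(y)\bigr]=0$ for all $x,y,z$. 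Second, since $f\neq 0$, choose $x_0$ with $f(x_0)\neq 0$ and divide, obtaining $g(yz)+\mu(y)g(z\sigma(y))=2g(z)g(y)$ for all $y,z\in S$. Third, relabel $(y,z)\mapsto(x,y)$ appropriately — but note (\ref{EQ4}) is written as $g(xy)+\mu(y)g(x\sigma(y))=2g(x)g(y)$, so I must be careful about which variable plays which role; a symmetry/relabelling argument, possibly combined with a second pass of the same two-way computation, or an appeal to Lemma \ref{lem611}, should reconcile $g(z\sigma(y))$ with $g(\sigma(y)z)$ or bring the identity into exactly the form (\ref{EQ4}).

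**Main obstacle.** The delicate point is the bookkeeping in the two-way expansion of $f(xyz)$: there are several ways to parenthesize and several places to apply (\ref{EQ1}), and one must choose them so that the unwanted $f$-terms cancel rather than proliferate. Centrality of $f$ is the essential lubricant here — without it one cannot rearrange the arguments $\sigma(y)x$, $\sigma(z)x$, $x\sigma(y)$, etc., to make the cancellations line up — and I expect the crux of the proof to be verifying that, after using centrality, the residual identity is precisely $2f(x)\bigl[g(xy\text{-type term})\bigr]=0$ with no leftover cross terms. A secondary subtlety is that the semigroup has no identity, so I cannot specialize $x$, $y$, or $z$ to $1$; every simplification must be a genuine algebraic identity valid for all $x,y,z\in S$, and the hypothesis "$S$ generated by its squares" is not needed for this lemma (it will matter later), only associativity, the automorphism property of $\sigma$, multiplicativity of $\mu$, and $f\neq 0$ central.
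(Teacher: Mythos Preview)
Your two-way expansion of $f(xyz)$ is a legitimate strategy and would eventually succeed, but the paper takes a shorter and cleaner route that bypasses all the ``delicate bookkeeping'' you anticipate. The paper's first move is the one you mention only in passing: centrality of $f$ turns $f(\sigma(y)x)$ into $f(x\sigma(y))$, so (\ref{EQ1}) immediately becomes the \emph{classical} Wilson form
\[
f(xy)+\mu(y)f(x\sigma(y))=2f(x)g(y).
\]
From this single observation the paper does \emph{not} expand $f(xyz)$ two ways. Instead it picks $x_0$ with $f(x_0)\neq 0$, solves the displayed equation for $g$,
\[
g(x)=\frac{1}{2f(x_0)}\bigl[f(x_0x)+\mu(x)f(x_0\sigma(x))\bigr],
\]
and then simply plugs this closed formula into $g(xy)+\mu(y)g(x\sigma(y))$. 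The four resulting $f$-terms regroup into two instances of the Wilson identity (with $x_0x$ and $x_0\sigma(x)$ in the first slot), each producing a factor $g(y)$; what remains is exactly $\frac{1}{f(x_0)}[f(x_0x)+\mu(x)f(x_0\sigma(x))]g(y)=2g(x)g(y)$. That is the whole proof.

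The payoff of the paper's approach is that it lands directly on (\ref{EQ4}) with the arguments in the right order, so the secondary obstacle you flag --- reconciling $g(z\sigma(y))$ with $g(\sigma(y)z)$, possibly via Lemma~\ref{lem611} --- never arises. Your plan is not wrong, but it trades a one-line use of centrality (rewrite the equation once, up front) for repeated uses scattered through a longer computation; the paper's argument shows that once (\ref{EQ1}) has been rewritten in classical Wilson form, the lemma is essentially the standard fact that the companion $g$ in Wilson's equation satisfies d'Alembert's equation.
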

\begin{proof} Since $f$ is central, $(f,g)$ satisfies the functional equation
\begin{equation}\label{EQ3}f(xy)+\mu(y)f(x\sigma(y))=2f(x)g(y),\,x,y\in S.\end{equation}
Moreover $f\neq0$, then there exists $x_{0}\in S$ such that
$f(x_{0})\neq0$. By replacing $x$ by $x_{0}$ and $y$ by $x$ in
(\ref{EQ3}) we obtain
$g(x)=\dfrac{1}{2f(x_{0})}[f(x_{0}x)+\mu(x)f(x_{0}\sigma(x))]$ for
all $x\in S$. It follows that
\begin{equation*}\begin{split}&g(xy)+\mu(y)g(x\sigma(y))=\dfrac{1}{2f(x_{0})}[f(x_{0}xy)+\mu(xy)f(x_{0}\sigma(x)\sigma(y))]\\
&+\dfrac{\mu(y)}{2f(x_{0})}[f(x_{0}x\sigma(y))+\mu(x\sigma(y))f(x_{0}\sigma(x)y))]\\&=\dfrac{1}{2f(x_{0})}[f(x_{0}xy)+\mu(y)f(x_{0}x\sigma(y))]
+\dfrac{\mu(x)}{2f(x_{0})}[f(x_{0}\sigma(x)y)+\mu(y)f(x_{0}\sigma(x)\sigma(y))]\\
&=\dfrac{1}{f(x_{0})}[f(x_{0}x)+\mu(x)f(x_{0}\sigma(x))]g(y)\\
&=\dfrac{2}{f(x_{0})}f(x_{0})g(x)g(y),\end{split}\end{equation*} for
all $x,y\in S$. Hence $g$ satisfies (\ref{EQ4}).
\end{proof}
\begin{thm}\label{thm65} The solutions $(f,g)$ of the functional equation
(\ref{EQ1}) are the following pairs:\\
(1) $f=0$ and $g$ arbitrary.\\
(2) $f=\lambda\chi+\delta\chi^{*}$ and $g=\dfrac{\chi+\chi^{*}}{2}$
where $\chi:S\to\mathbb{C}$ is a nonzero multiplicative function on
$S$ and $\lambda,\delta\in\mathbb{C}$ are constants such that
$(\lambda,\delta)\neq(0,0)$.\\
(3) \[ \left\{
\begin{array}{r c l}
f&=&\chi(c+A)\quad\text{and}\quad g=\chi\quad\text{on}\quad S\setminus I_{\chi}\\
f&=&0\quad\text{and}\quad g=0\quad\text{on}\quad I_{\chi},
\end{array}
\right.
\]
where $c\in\mathbb{C}$ is a constant, $\chi:S\to\mathbb{C}$ is a
nonzero multiplicative function and $A:S\setminus I_{\chi}\to
\mathbb{C}$ is a nonzero additive function such that $\chi=\chi^{*}$
and $A\circ\sigma=-A$.
\end{thm}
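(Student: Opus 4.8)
\emph{Strategy.} I would treat $f=0$ separately, and for $f\neq 0$ force $f$ to be central so that Lemma~\ref{lem64} becomes available, classify $g$ through the associated $\mu$-d'Alembert equation, and finally recover $f$ by dividing the (now centralized) equation by the relevant multiplicative function. If $f=0$ then (\ref{EQ1}) holds for every $g$, giving family (1). The converse — that the families (1)--(3) are indeed solutions — I would check by direct substitution; for (3) the points to verify are that $I_\chi:=\{x\in S:\chi(x)=0\}$ is a two-sided ideal with $\sigma(I_\chi)=I_\chi$ and $\mu(y)\chi(\sigma(y))=\chi(y)$ off $I_\chi$ (both because $\chi=\chi^\ast$), and that $A\circ\sigma=-A$ makes the cross-terms cancel on $S\setminus I_\chi$.

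Now suppose $(f,g)$ solves (\ref{EQ1}) with $f\neq 0$. Replacing $y$ by $\sigma(y)$ in (\ref{EQ1}) and multiplying by $\mu(y)$ (using $\mu(y)\mu(\sigma(y))=1$) yields the companion equation $f(yx)+\mu(y)f(x\sigma(y))=2f(x)g^\ast(y)$; adjoining the two equations obtained from (\ref{EQ1}) and its companion by interchanging $x$ and $y$, one can solve for each of $f(xy),f(yx),f(\sigma(y)x),f(x\sigma(y))$ in terms of $f,f^\ast,g,g^\ast$. The first main step is to combine these relations with the associativity identities obtained by expanding $f((xy)z)=f(x(yz))$ repeatedly (through both (\ref{EQ1}) and its companion) and to conclude that $f$ is central. \emph{I expect this step to be the main obstacle}: extracting centrality requires combining (\ref{EQ1}), its companion, the $x\leftrightarrow y$ swaps and the associativity identities in just the right way, and it is here that the blanket hypothesis that $S$ is generated by its squares is genuinely used — via Lemma~\ref{lem610} applied to a suitable homogeneous combination, together with the elementary fact that a semigroup generated by its squares satisfies $S=S^2$.

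Granting that $f$ is central, (\ref{EQ1}) becomes $f(xy)+\mu(y)f(x\sigma(y))=2f(x)g(y)$, so by Lemma~\ref{lem64} the function $g$ solves the $\mu$-d'Alembert equation (\ref{EQ4}). Substituting $y\mapsto\sigma(y)$ in (\ref{EQ4}) and comparing with (\ref{EQ4}) gives $g(x)(g(y)-g^\ast(y))=0$ for all $x,y$, hence $g=g^\ast$ or $g=0$. If $g=0$ then (\ref{EQ1}) reads $f(xy)+\mu(y)f(\sigma(y)x)=0$, and Lemma~\ref{lem610} (using $S=S^2$) forces $f=0$, contrary to assumption; so $g$ is even, and the classification of the solutions of the $\mu$-d'Alembert equation (Lemma~\ref{lem612} and its proof) gives $g=\dfrac{\chi+\chi^\ast}{2}$ for some nonzero multiplicative $\chi:S\to\mathbb{C}$.

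It remains to recover $f$. On a suitable $\sigma$-stable subsemigroup on which $\chi$ does not vanish, one divides the centralized equation by $\chi$ (legitimate since $\chi$ is multiplicative and $\mu(y)\chi(\sigma(y))=\chi^\ast(y)$ there); writing $\psi:=f/\chi$ and $\rho:=\chi^\ast/\chi$, which is multiplicative, this becomes $\psi(xy)-\psi(x)=\rho(y)\bigl(\psi(x)-\psi(x\sigma(y))\bigr)$. When $\chi\neq\chi^\ast$ (so $\rho\not\equiv 1$), solving this Levi--Civita-type equation yields $\psi=a+b\rho$, i.e.\ $f=a\chi+b\chi^\ast$ there; patching (handling the zeros of $\chi$ and of $\chi^\ast$) extends this to all of $S$ and gives family (2). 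When $\chi=\chi^\ast$ (so $\rho\equiv 1$), the equation reduces to the Jensen-type equation $\psi(xy)+\psi(x\sigma(y))=2\psi(x)$, whose solutions are $\psi=c+A$ with $A$ additive and $A\circ\sigma=-A$; thus $f=\chi(c+A)$ on $S\setminus I_\chi$, while $f=g=0$ on $I_\chi$ ($g=\chi=0$ there is immediate, and $f=0$ on $I_\chi$ follows by applying Lemma~\ref{lem610} to the restriction of (\ref{EQ1}), using $S=S^2$ once more). This is family (3), with $A\equiv 0$ absorbed into (2). Apart from the centrality of $f$, the remaining technical points — solving the Levi--Civita and Jensen equations and the bookkeeping on $I_\chi$ — are comparatively routine.
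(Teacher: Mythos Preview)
Your overall architecture --- reduce to $f$ central, invoke Lemma~\ref{lem64}, classify $g$ via Lemma~\ref{lem612}, then recover $f$ --- is reasonable \emph{once centrality is in hand}, and your downstream reductions (the Levi--Civita/Jensen equations for $\psi=f/\chi$) would indeed finish the job. The problem is the step you yourself flag as the main obstacle: you do not actually prove that $f$ is central, and the sketch you give does not obviously work. The four relations you list --- (\ref{EQ1}), its $y\mapsto\sigma(y)$ companion, and the two $x\leftrightarrow y$ swaps --- involve six unknown two-variable values $f(xy)$, $f(yx)$, $f(\sigma(y)x)$, $f(x\sigma(y))$, $f(\sigma(x)y)$, $f(y\sigma(x))$, not four, so you cannot ``solve for each'' as claimed; and combining them with associativity identities does not, as far as I can see, produce a homogeneous relation of the shape needed to feed into Lemma~\ref{lem610}. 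Without this step, the rest of your argument does not start.

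The paper avoids the difficulty entirely by \emph{not} trying to prove centrality up front. Instead it manipulates (\ref{EQ1}) on the triples $(ax,y)$, $(\sigma(y)a,x)$, $(a,xy)$ to obtain
\[
f(axy)=g(xy)f(a)+f(ax)g(y)+g(x)\bigl[f(ay)-2f(a)g(y)\bigr],
\]
which says that $f_a(x):=f(ax)-f(a)g(x)$ satisfies the sine addition law $f_a(xy)=f_a(x)g(y)+f_a(y)g(x)$ for every $a$. The known classification of sine solutions (Stetk{\ae}r's Theorem~4.1) now gives the form of $g$ \emph{before} any centrality is established; centrality of $f$ is then read off, case by case, from explicit formulas (e.g.\ $f$ expressed through $g(a\,\cdot\,)$ or $f_a$, which are already known to be abelian). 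So the key idea you are missing is this reduction to the sine addition law for the translates $f_a$; it replaces your unproved centrality step and is what makes the argument go through on a semigroup.
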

\begin{proof} Let $(f,g)$ be a solution of Eq. (\ref{EQ1}). If $f=0$ then $g$ is
arbitrary, which is solution (1).\\In what remains of the proof we
assume that $f\neq0$. Our computations are similar to those of
Stetk{\ae}r \cite{Stetkaer3}. Let $a,x,y\in S$ be arbitrary. \\For
the pair $(ax,y)$ the functional equation (\ref{EQ1}) implies that
$$f(axy)+\mu(y)f(\sigma(y)ax)=2f(ax)g(y).$$
By applying (\ref{EQ1}) to the pair $(\sigma(y)a,x)$ and multiplying
the identity obtained by $-\mu(y)$ we obtain
$$-\mu(y)f(\sigma(y)ax)-\mu(xy)f(\sigma(x)\sigma(y)a)=-2\mu(y)f(\sigma(y)a)g(x).$$
When we apply (\ref{EQ1}) to the pair $(a,xy)$ we get that
$$f(axy)+\mu(xy)f(\sigma(x)\sigma(y)a)=2f(a)g(xy).$$
By adding the three identities above we obtain
\begin{equation}\label{eq7-54}f(axy)=g(xy)f(a)+f(ax)g(y)+g(x)[f(ay)-2f(a)g(y)].\end{equation}
Using the notation
\begin{equation}\label{eq7-55}f_{a}(x)=f(ax)-f(a)g(x)\end{equation}
and seeing that $a,x,y$ are arbitrary, we deduce from (\ref{eq7-54})
that, for all $a\in S$, the pair $(f_{a},g)$ satisfies the sine
addition law
\begin{equation}\label{eq7-56}f_{a}(xy)=f_{a}(x)g(y)+f_{a}(y)g(x),\,x,y\in S.\end{equation}
There are two cases to consider.\\
Case 1. Suppose that $f_{a}=0$ for all $a\in S$. Then  we get from
(\ref{eq7-55}) that
\begin{equation}\label{eq7-57}f(xy)=f(x)g(y)\end{equation} for all $x,y\in
S$, which says that $f$ is a joint eigenvector for a representation
( the right regular representation) with $g(y)$, $y\in S$, as
corresponding eigenvalues. It follows that $g$ is multiplicative. We
write $\chi:=g$, and note that $\chi\neq0$ by Lemma \ref{lem610}.
Substituting (\ref{eq7-57}) in (\ref{EQ1}) we find that
\begin{equation}\label{eq7-571}f^{*}(y)\chi(x)=f(x)\chi(y)\quad\text{for all}\quad x,y\in S,\end{equation}
which implies that $f=\lambda\chi$ for some $\lambda\in \mathbb{C}$.
But $f\neq0$, so $\lambda\in\mathbb{C}\setminus\{0\}$. The result occurs in part (2) with $\delta=0$.\\
Case 2. Suppose that there exists $a\in S$ such that $f_{a}\neq0$.
Then, according to \cite[Theorem 4.1(b)]{Stetkaer2}, we deduce from
(\ref{eq7-56}) that there exist two multiplicative functions
$\chi_{1},\chi_{2}:S\to\mathbb{C}$ such that
\begin{equation}\label{eq7-58}g=\dfrac{\chi_{1}+\chi_{2}}{2}.\end{equation}
We split the discussion into the subcases of $\chi_{1}\neq\chi_{2}$ and $\chi_{1}=\chi_{2}$.\\
Subcase 2.1. Suppose that $\chi_{1}\neq\chi_{2}$. Then, according to
\cite[Theorem 4.1(c)]{Stetkaer2}, there exists a constant
$\alpha\in\mathbb{C}\setminus\{0\}$ such that
\begin{equation}\label{eq7-59}f_{a}=\dfrac{1}{2\alpha}(\chi_{1}-\chi_{2}).\end{equation}
Let $x,y,z\in S$ be arbitrary. For the pair $(x,yz)$ Eq. (\ref{EQ1})
reads
$$f(xyz)+\mu(yz)f(\sigma(y)\sigma(z)x)=2f(x)g(yz).$$
By applying Eq. (\ref{EQ1}) to the pair $(z,\sigma(x)\sigma(y))$ and
multiplying the identity obtained by $\mu(xy)$ we obtain
$$\mu(xy)f(z\sigma(x)\sigma(y))+f(xyz)=2\mu(xy)f(z)g(\sigma(x)\sigma(y)).$$
When we apply Eq. (\ref{EQ1}) to the pair $(\sigma(y),\sigma(z)x)$
and multiplying the identity obtained by $-\mu(yz)$ we get that
$$-\mu(yz)f(\sigma(y)\sigma(z)x)-\mu(xy)f(z\sigma(x)\sigma(y))=-2\mu(yz)f(\sigma(y))g(\sigma(z)x).$$
Now we add the identities above, which yields that
\begin{equation}\label{eq7-601}f(xyz)=\mu(xy)f(z)g(\sigma(x)\sigma(y))+f(x)g(yz)-\mu(yz)f(\sigma(y))g(\sigma(z)x).\end{equation}
For the triple $(x,y,z)$ the identity (\ref{eq7-54}) reads
$$f(xyz)=g(yz)f(x)+f(xy)g(z)+g(y)f(xz)-2f(x)g(y)g(z).$$ From the two
last identities we deduce that
\begin{equation}\label{eq7-60}\begin{split}&f(xy)g(z)+g(y)f(xz)-2f(x)g(y)g(z)=\mu(xy)f(z)g(\sigma(x)\sigma(y))\\&-\mu(yz)f(\sigma(y))g(\sigma(z)x),\end{split}\end{equation}
for all $x,y,z\in S$.\\We have two subcases to consider.\\
Subcase 2.1.1. Suppose that there exists $y_{0}\in S$ such that
$g(\sigma(a)y_{0})\neq0$. In view of (\ref{eq7-56}) we get,
according to \cite[Theorem 4.1(e)]{Stetkaer2}, that $g$ and $f_{a}$
are abelian. As $g(\sigma(a)y_{0})\neq0$ and $\mu(x)\neq0$ for all
$x\in S$, we derive from (\ref{eq7-60}), by putting
$(x,y)=(a,\sigma(y_{0}))$, that $f$ is abelian. Hence, according to
Lemma \ref{lem64}, $g$ is a solution of Eq. (\ref{EQ4}). So, seeing
that $g$ is central in view of (\ref{eq7-58}), we get that $g$
satisfies the functional equation
$$g(xy)+\mu(y)g(\sigma(y)x)=2g(x)g(y),\,x,y\in S.$$ Now, according to Lemma \ref{lem612}, there exists a multiplicative function
$\chi:S\to\mathbb{C}$ such that
\begin{equation}\label{eq7-61}g=\dfrac{\chi+\chi^{*}}{2}.\end{equation} From (\ref{eq7-61}) and (\ref{eq7-58})
we get that $\chi_{1}+\chi_{2}=\chi+\chi^{*}$. Since
$\chi_{1}\neq\chi_{2}$ we have $\chi^{*}\neq\chi$. Moreover,
according to \cite[Corollary 3.19]{Stetkaer2}, we get that
$\chi_{1}=\chi$ and $\chi_{2}=\chi^{*}$, or $\chi_{1}=\chi^{*}$ and
$\chi_{2}=\chi$. Hence, up to interchange $\chi_{1}$ and $\chi_{2}$,
the identity (\ref{eq7-58}) reads
\begin{equation}\label{eq7-62}f_{a}=\dfrac{1}{2\alpha}(\chi-\chi^{*}).\end{equation}
On the other hand, by replacing $x$ by $a$ and $y$ by
$\sigma(y_{0})$ in (\ref{eq7-60}) and using (\ref{eq7-55}) we get
that
\begin{equation*}\begin{split}&\mu(a\sigma(y_{0}))f(z)g(\sigma(a)y_{0})=[f(a\sigma(y_{0}))
-g(\sigma(y_{0}))f(a)]g(z)+g(\sigma(y_{0}))f_{a}(z)\\&+\mu(\sigma(y_{0}))f(y_{0})\mu(z)g(\sigma(z)a),\end{split}\end{equation*}
for all $z\in S$. Since $\mu(a\sigma(y_{0}))\neq0$,
$g(\sigma(a)y_{0})\neq0$ and $\mu(z\sigma(z))=1$ for all $z\in S$,
we derive from the identity above, by using (\ref{eq7-61}) and
(\ref{eq7-62}), that there exist two constants
$\lambda,\delta\in\mathbb{C}$ such that
$f=\lambda\chi+\delta\chi^{*}$. As $f\neq0$ we have
$(\lambda,\delta)\neq(0,0)$. The result obtained in this Subcase
occurs in part (2).\\
Subcase 2.1.2. Suppose that $g(\sigma(a)y)=0$ for all $y\in S$.
 Then, by putting $x=a$ in (\ref{eq7-601}) we get that
 $$f(ayz)=f(a)g(yz)-\mu(yz)f(\sigma(y))g(\sigma(z)a),$$ for all $y,z\in
 S$. According to (\ref{eq7-58}) and (\ref{eq7-59}) the functions $g$ and $f_{a}$
 are central, so is the function $x\mapsto f(ax),\,x\in S$ in view of
 (\ref{eq7-55}). So, the identity above implies that $\mu(yz)f(\sigma(y))g(a\sigma(z))=\mu(yz)f(\sigma(z))g(a\sigma(y))$
 for all $y,z\in S$. As $\mu(x)\neq0$ for all $x\in S$ and $\sigma$
 is involutive we deduce that $$f(y)g(az)=f(z)g(ay)$$ for all $y,z\in
 S$. Since $f\neq0$ we derive, by dividing the last identity by $f(y)$ with $y\in S$ chosen so that $f(y)\neq0$, that there
 exists a constant $\lambda$ such that
\begin{equation}\label{eq7-63}g(ax)=\lambda f(x),\end{equation}
 for all $x\in S$. We discuss the following subcases\\
Subcase 2.1.2.1. Suppose that $\lambda=0$. Then, (\ref{eq7-63})
implies that $g(ax)=0$ for all $x\in S$. As $g(\sigma(a)y)=0$ for
all $y\in S$ we get from (\ref{eq7-60}) that
\begin{equation}\label{eq7-64}f(ay)g(z)+g(y)f(az)=2g(y)g(z)f(a),\end{equation}
for all $y,z\in S$. Now, introducing $f_{a}$ by using
(\ref{eq7-55}), the identity (\ref{eq7-64}) reads
$f_{a}(y)g(z)+f_{a}(z)g(y)=0$ for all $y,z\in S$. According to
\cite[Exercise 1.1(b), p.6]{Stetkaer2} $f_{a}=0$ or $g=0$. In the
present Case 2 $f_{a}\neq0$, so $g=0$.
Then, according to Lemma \ref{lem610}, $f=0$ which contradicts the assumption on $f$. Hence, the present Subcase does not occur.\\
Subcase 2.1.2.2. Suppose that $\lambda\neq0$. Then, (\ref{eq7-63})
implies that
\begin{equation}\label{eq7-65}f(x)=\dfrac{1}{\lambda}g(ax),\end{equation}
for all $x\in S$, from which we derive, taking (\ref{eq7-58}) into
account, that $f$ is central. As $f\neq0$ we deduce by Lemma
\ref{lem64} and seeing that $g$ is central, that $g$ satisfies the
functional equation
$$g(xy)+\mu(y)g(\sigma(y)x)=2g(x)g(y),\,x,y\in S.$$
So, according to Lemma \ref{lem612}, there exists a multiplicative
function $\chi:S\to\mathbb{C}$ such that
$$g=\dfrac{\chi+\chi^{*}}{2}$$ and, as in Subcase 2.1.1.,
$\chi^{*}\neq\chi$. Hence, the supposition $g(\sigma(a)x)=0$ for all
$x\in S$ implies that $\chi(a)\chi+\chi^{*}(a)\chi^{*}=0$. So,
according to \cite[Theorem 3.18(a)]{Stetkaer2}, we get that
$\chi(a)=\chi^{*}(a)=0$. By substituting this in the expression
above of $g$, we obtain $g(ax)=0$ for all $x\in S$. So, taking
(\ref{eq7-65}) into account, we get that $f=0$, which contradicts
the assumption on $f$. Hence, the present Subcase does
not occur.\\
Subcase 2.2. Suppose that $\chi_{1}=\chi_{2}$. Then letting
$\chi:=\chi_{1}$ we have $g=\chi$. Note for use bellow that
$\chi\neq0$. Since $S$ is a semigroup generated by its squares and
$f_{a}\neq0$ we get, according to \cite[Lemma 3.4(ii)]{Ebanks and
Stetkaer2}, that there exists a nonzero additive function
$A:S\setminus I_{\chi}\to \mathbb{C}$ such that
\begin{equation}\label{eq7-66}
f_{a}=\chi A\quad\text{on}\quad S\setminus
I_{\chi}\quad\text{and}\quad f_{a}=0\quad\text{on}\quad I_{\chi}.
\end{equation}
Proceeding exactly as we have done to prove (\ref{eq7-59}) in
Subcase 2.1 we get after simplifications, by replacing $x$ by $a$,
that
\begin{equation}\label{eq7-67}f_{a}(yz)=\chi^{*}(a)\chi^{*}(y)f(z)-\chi(a)f^{*}(y)\chi^{*}(z),\end{equation}
for all $y,z\in S$. According to \cite[Theorem 4.4(e)]{Stetkaer2}
the function $f_{a}$ is abelian. Moreover, since $f_{a}\neq0$ and
$S$ is generated by its squares, the identity (\ref{eq7-67}) implies
that $\chi^{*}(a)\neq0$ or $\chi(a)\neq0$. Since $\sigma:S\to S$ is
involutive and $\chi\neq0$ there exists $y_{0}\in S$ such that
$\chi(\sigma(y_{0}))\neq0$. Now, when we put $y=y_{0}$ in
(\ref{eq7-67}) if $\chi^{*}(a)\neq0$, and $z=y_{0}$ if
$\chi(a)\neq0$, and using that $\mu(x)\neq0$ for all $x\in S$, we
derive that $f$ is central. As $f\neq0$ and $g$ is central we get,
by applying Lemma \ref{lem64}, that $g$ satisfies the functional
equation
$$g(xy)+\mu(y)g(\sigma(y)x)=2g(x)g(y),\,x,y\in S.$$ So that,
according to Lemma \ref{lem612}, there exists a multiplicative
function $m:S\to\mathbb{C}$ such that $g=\dfrac{m+m^{*}}{2}$, which
implies that $2\chi=m+m^{*}$. Hence $\chi=m=m^{*}=\chi^{*}$. So, by
putting $y=a$, the identity (\ref{eq7-67}) reduces to
\begin{equation}\label{eq7-68}f_{a}(az)=[\chi(a)f(z)-\mu(a)f(\sigma(a))\chi(z)]\chi(a),\end{equation}
for all $z\in S$. We have seen above that either $\chi^{*}(a)=0$ or
$\chi(a)=0$. As $\chi=\chi^{*}$ we get that $\chi(a)\neq0$. By a
small computation we deduce, by using (\ref{eq7-66}) and
(\ref{eq7-68}) and writing $A$ instead of $(\chi(a))^{-1}A$, that
\begin{equation}\label{eq7-69}
f=\chi(c+ A)\quad\text{on}\quad S\setminus
I_{\chi}\quad\text{and}\quad f=0\quad\text{on}\quad I_{\chi},
\end{equation}
where $c\in\mathbb{C}$ is a constant. Since $\chi^{*}=\chi$ we get
easily  that $\sigma(S\setminus I_{\chi})=S\setminus I_{\chi}$.
Hence, the functional equation (\ref{EQ1}) reads
$\chi(xy)(c+A(xy))=2\chi(xy)(c+A(x))$ for all $x\in S\setminus
I_{\chi}$, which implies that $A(x)=A(y)-c$ for all $x\in S\setminus
I_{\chi}$. Then the additive function $A$ is bounded on $S\setminus I_{\chi}$, which contradicts that $A\neq0$.\\
Now, let $x,y\in S\setminus I_{\chi}$ be arbitrary. The functional
equation (\ref{EQ1}) implies that
$$\chi(xy)(c+A(xy))+\mu(y)\chi(\sigma(y)x)(c+A(\sigma(y)x)=2\chi(y)\chi(x)(c+A(x)).$$
By using that $\chi^{*}=\chi$, $\chi$ is multiplicative, $A$ is
additive and $\chi(xy)\neq0$ the identity above reduces, by a small
computation, to $(A\circ\sigma+A)(y)=0$. So, $y$ being arbitrary, we
deduce that $A\circ\sigma=-A$. The result
obtained in this Subcase occurs in part (3).\\
Conversely, if $f$ and $g$ are of the forms (1)-(3) in Theorem
\ref{thm65} we check by elementary computations that $f$ and $g$
satisfy the functional equation (\ref{EQ1}). This completes the
proof of Theorem \ref{thm65}.
\end{proof}
\section{Solutions of Eq. (\ref{EQ2}) on a semigroup generated by its squares}
In Theorem \ref{thm66} we solve the functional equation (\ref{EQ2})
on semigroups generated by their squares.
\begin{thm}\label{thm66}The solutions $f,g,:S\to\mathbb{C}$ of the functional equation
(\ref{EQ2}) are the follows pairs:\\
(1) $f=0$ and $g$ is arbitrary.\\
(2) $f=\alpha\dfrac{\chi+\chi^{*}}{2}$ and
$g=\dfrac{\chi+\chi^{*}}{2}$, where
$\alpha\in\mathbb{C}\setminus\{0\}$ is a constant and $\chi:S\to
\mathbb{C}$ is a multiplicative function.
\end{thm}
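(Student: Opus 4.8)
The plan is to follow the same strategy used for equation (\ref{EQ1}): introduce an auxiliary family of functions, feed it into the classical sine-addition-law classification, and then bootstrap from d'Alembert-type information. Let $(f,g)$ be a solution of (\ref{EQ2}). If $f=0$ there is nothing to prove, giving case (1); so assume $f\neq 0$. The first step is to symmetrize. Replacing $(x,y)$ by various shifted pairs — concretely, writing (\ref{EQ2}) for $(ax,y)$, for $(\sigma(y)a,x)$ multiplied by $\mu(y)$, and for $(a,xy)$ — and adding, exactly as in the derivation of (\ref{eq7-54}), should produce an identity expressing $f(axy)$ in terms of products of $f$ and $g$ at shorter words. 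The key difference from the (\ref{EQ1}) analysis is that now $f$ sits on the right of $g$ on the right-hand side, so the roles of $f$ and $g$ are partially swapped; I expect the manipulation to show that $g$ (not $f$) is the function to which a d'Alembert-type equation applies, while $f$ ends up forced to be proportional to $g^{e}=\tfrac{g+g^{*}}{2}$.

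Next I would extract multiplicativity information. Setting $x=y$ or specializing one variable in (\ref{EQ2}) and using that $S$ is generated by its squares, together with Lemma \ref{lem610} to rule out the degenerate vanishing case, I expect to show that $g$ satisfies the $\mu$-d'Alembert variant (\ref{EQ21}). Then Lemma \ref{lem612} applies directly and gives $g=\tfrac{\chi+\chi^{*}}{2}$ for some multiplicative $\chi:S\to\mathbb C$. Note $\chi\neq 0$ since otherwise $g=0$ and then (\ref{EQ2}) reads $f(xy)+\mu(y)f(\sigma(y)x)=0$, whence $f=0$ by Lemma \ref{lem610}, contradicting $f\neq0$.

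With $g=\tfrac{\chi+\chi^{*}}{2}$ in hand, the remaining task is to pin down $f$. Substituting this $g$ back into (\ref{EQ2}) and using that $\chi$ is multiplicative and $\mu(x\sigma(x))=1$, I would fix an element $x_0$ with $f(x_0)\neq 0$, set $y=x_0$ (or $x=x_0$) to solve for $f$ in terms of $g$, and then re-insert the resulting expression into (\ref{EQ2}) to force $f=\alpha\tfrac{\chi+\chi^{*}}{2}$ with $\alpha\in\mathbb C\setminus\{0\}$; the constant $\alpha$ is nonzero precisely because $f\neq 0$. This rules out the "additive" branch that appeared in Theorem \ref{thm65}(3): because $f$ is tied to $g$ on \emph{both} variables of (\ref{EQ2}) (one factor $f(y)$ and, after symmetrization, data at $g(x)$), there is no room for an unbounded additive perturbation — any such term would have to satisfy a boundedness relation forcing it to vanish, just as the $A(x)=A(y)-c$ computation did at the end of the proof of Theorem \ref{thm65}. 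The main obstacle I anticipate is the bookkeeping in the symmetrization step: getting the $\mu$-factors to collapse correctly (repeatedly using $\mu$ multiplicative and $\mu(x\sigma(x))=1$, and that $\sigma$ is an \emph{automorphism}, not an anti-automorphism) so that the three shifted copies of (\ref{EQ2}) really do telescope into a usable identity, and then arguing cleanly that no central/abelian subtleties obstruct the application of Lemma \ref{lem612}. Finally, the converse direction is a routine check: for $f=\alpha\,g$ with $g=\tfrac{\chi+\chi^{*}}{2}$ one verifies $g(xy)+\mu(y)g(\sigma(y)x)=2g(x)g(y)$ via Lemma \ref{lem612}, and multiplying by $\alpha$ gives (\ref{EQ2}).
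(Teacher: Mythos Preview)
Your plan has a real gap at the very first step. Carrying out the three-term symmetrization you describe for (\ref{EQ2}) --- writing it for $(ax,y)$, for $(\sigma(y)a,x)$ multiplied by $-\mu(y)$, and for $(a,xy)$, then adding --- actually gives
\[
f(axy)=f(y)\,g(ax)-\mu(y)f(x)\,g(\sigma(y)a)+f(xy)\,g(a),
\]
not an analogue of (\ref{eq7-54}). Because the right-hand side of (\ref{EQ2}) is $2f(y)g(x)$ rather than $2f(x)g(y)$, the roles do not simply swap: no natural auxiliary function $f_a$ or $g_a$ emerges satisfying a sine addition law, and nothing in this identity forces $g$ to satisfy (\ref{EQ21}). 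Your subsequent steps (``I expect to show that $g$ satisfies the $\mu$-d'Alembert variant'', ``solve for $f$ in terms of $g$'') are all predicated on this telescoping producing usable structure, so the argument as written does not go through.

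The paper takes a quite different and shorter route, via the even/odd decomposition relative to $F\mapsto F^{*}$. Applying (\ref{EQ2}) to $(\sigma(y),x)$, multiplying by $\mu(y)$, and subtracting from (\ref{EQ2}) yields $f^{o}(xy)=g(x)f(y)-f(x)g^{*}(y)$. Feeding this into the equation for $f^{e}$ and comparing parities in $x$ gives $g^{o}(x)f^{o}(y)+g^{o}(y)f^{o}(x)=0$, so $f^{o}=0$ or $g^{o}=0$; in the latter case $f^{o}(xy)=-f^{o}(yx)$ and Lemma~\ref{lem611} forces $f^{o}=0$ anyway. Once $f^{o}=0$, the displayed identity collapses to $g(x)f(y)=f(x)g^{*}(y)$, giving $g=\lambda f$ for some $\lambda\neq 0$; multiplying (\ref{EQ2}) by $\lambda$ then shows $g$ itself satisfies (\ref{EQ21}), and Lemma~\ref{lem612} finishes. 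Note that the proportionality $g=\lambda f$ is obtained \emph{before} any d'Alembert classification, which is why no additive branch can appear --- in contrast to your plan, where you hoped to rule it out only at the end by a boundedness argument.
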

\begin{proof}An elementary computation shows that if $f$ and $g$ are
the forms (1)-(2) then they satisfy the functional equation
(\ref{EQ2}), so left is that any solution $(f,g)$ of (\ref{EQ2})
fits into (1)-(2).\\Let $f,g:S\to\mathbb{C}$ be a solution of Eq.
(\ref{EQ2}). The part (1) is trivial. So, in what follows we assume
that $f\neq0$. As $S$ is generated by its squares, we get by using
Lemma \ref{lem610} that $g\neq0$.\\Let $x,y\in S$ be arbitrary. By
applying (\ref{EQ2}) to the pair $(\sigma(y),x)$ and multiplying the
identity obtained by $\mu(y)$ we get that
$$\mu(y)f(\sigma(y)x)+f^{*}(xy)=2f(x)g^{*}(y).$$
By subtracting this from (\ref{EQ2}) we obtain
\begin{equation}\label{eq7-70}f^{o}(xy)=g(x)f(y)-f(x)g^{*}(y).\end{equation}
On the other hand, seeing that $f=f^{e}+f^{o}$ and taking
(\ref{eq7-70}) into account we get, from the functional equation
(\ref{EQ2}), that
\begin{equation*}\begin{split}&f^{e}(xy)+\mu(y)f^{e}(\sigma(y)x)=2f(y)g(x)-f^{o}(xy)-\mu(y)f^{o}(\sigma(y)x)\\
&=2g(x)f(y)-f(y)g(x)+f(x)g^{*}(y)-\mu(y)[g(\sigma(y))f(x)-f(\sigma(y))g^{*}(x)]\\
&=g(x)f(y)-f(x)g^{*}(y)+g^{*}(y)f(x)-f^{*}(y)g^{*}(x).\end{split}\end{equation*}
So that
\begin{equation*}f^{e}(xy)+\mu(y)f^{e}(\sigma(y)x)=g(x)f(y)+g^{*}(x)f^{*}(y).\end{equation*}
Since $f^{*}=f^{e}-f^{o}$ the identity above becomes
\begin{equation*}f^{e}(xy)+\mu(y)f^{e}(\sigma(y)x)=[g^{e}(x)+g^{o}(x)][f^{e}(y)+f^{o}(y)]+[g^{e}(x)-g^{o}(x)][f^{e}(y)-f^{o}(y)],\end{equation*}
which implies, by a small computation, that
\begin{equation}\label{eq7-71}f^{e}(xy)+\mu(y)f^{e}(\sigma(y)x)=2g^{e}(x)f^{e}(y)+2g^{o}(x)f^{o}(y).\end{equation}
For the pair $(\sigma(x),y)$ the functional equation (\ref{eq7-71})
reads
$$f^{e}(\sigma(x)y)+\mu(y)f^{e}\circ\sigma(yx)=2g^{e}(\sigma(x))f^{e}(y)+2g^{o}(\sigma(x))f^{o}(y).$$
Now, when we multiply this by $\mu(x)$ we obtain
\begin{equation}\label{eq7-72}\mu(x)f^{e}(\sigma(x)y)+f^{e}(yx)=2g^{e}(x)f^{e}(y)-2g^{o}(x)f^{o}(y).\end{equation}
So, $x$ and $y$ being arbitrary, we get from (\ref{eq7-71}) and
(\ref{eq7-72}) that
\begin{equation*}g^{e}(x)f^{e}(y)-g^{e}(y)f^{e}(x)=g^{o}(x)f^{o}(y)+g^{o}(y)f^{o}(x)\end{equation*}
for all $x,y\in S$.\\In the identity above the left hand side is an
even function of $x$ while the right is an odd function of $x$, then
\begin{equation}\label{eq7-73}g^{e}(x)f^{e}(y)-g^{e}(y)f^{e}(x)=0\end{equation}
and
\begin{equation}\label{eq7-74}g^{o}(x)f^{o}(y)+g^{o}(y)f^{o}(x)=0,\end{equation}
for all $x,y\in s$. According to \cite[Exercise 1.1(b),
p.6]{Stetkaer2} we deduce from (\ref{eq7-74}) that we have two cases
to consider\\
\underline{Case 1}: $f^{o}=0$. As $g\neq0$ we get from
(\ref{eq7-70}) that
\begin{equation}\label{eq7-75}g=\lambda f,\end{equation} for some constant
$\lambda\in\mathbb{C}\setminus\{0\}$. Hence, multiplying Eq.
(\ref{EQ2}) by $\lambda$, the function $g$ satisfies the following
variant of d'Alembert's functional equation
$$g(xy)+\mu(y)g(\sigma(y)x)=2g(x)g(y),$$ for all $x,y\in S$. By
applying Lemma \ref{lem612} and taking (\ref{eq7-75}) into account
we obtain part (2).\\
\underline{Case 2}: $g^{o}=0$. Then $g^{*}=g$ and (\ref{eq7-70})
reads $f^{o}(xy)=g(x)f(y)-f(x)g(y)=-f^{o}(yx)$ for all $x,y\in S$.
Sine $S$ is generated by its squares we deduce by applying Lemma
\ref{lem611} that $f^{o}=0$. So we go back to Case 1. This finishes
the proof.
\end{proof}


\begin{thebibliography}{99}
\bibitem{Acz�l} J. Acz\'{e}l, Lectures on functional equations and their applications.
Mathematics in Sciences and Engineering, Vol. 19, Academic Press,
New York-London xx+510 pp. 1966.
\bibitem{Bouikhalene and Elqorachi} Bouikhalene, B. and Elqorachi, E.:
A class of functional equations on monoids, arXiv:1487970 [math. GM]
22 Feb 2016.
\bibitem{Ebanks and Stetkaer1} Ebanks B. and Stetk\ae r, H.:
On Wilson's functional equations, Aequationes Math., 89(2015),
339-354.
\bibitem{Ebanks and Stetkaer2} Ebanks, B. R. and Stetk\ae r, H.:
d'Alembert's other functional equation on monoids with involution,
Aequationes Math., 89(2015), 187-206.
\bibitem{Elqorachi and Redouani} Elqorachi, E. and Redouani, A.:
Solutions and stability of a variant of Wilson's functional
equation, Proyecciones Journal of Mathematics 37(2)(2018), 317-344.
\bibitem{Fadli et al.} Fadli, B., Zeglami, D. and Kabbaj, S.: A
variant of Wilson's functional equation, Publ. Math. Debrecen
87(3-4)(2015), 415-427.
\bibitem{ST} Stetk\ae r, H.: More about Wilson's functional equation, Aequat. Math. (2019).
https://doi.org/10.1007/s00010-019-00654-9.
\bibitem{Stetkaer2} Stetk{\ae}r, H.: Functional equations
on groups. World Scientific Publishing Co, Singapore 2013.
\bibitem{Stetkaer3} Stetk\ae r, H.: A variant of d'Alembert's
functional equation, Aequationes Math., 89(2015), 657-662.
\bibitem{Wilson1} Wilson, W. H.: On certain related functional
equations, Bull. Amer. Math. Soc. 26(1919-20), 33-312. Fortschr.:
47, 320(1919-20).
\bibitem{Wilson2} Wilson, W. H.: Two general functional equations, Bull. Amer. Math.
Soc. 31, no. 7,(1925), 330-334. MR1561055.
\end{thebibliography}
\end{document}